\numberwithin{equation}{section}
\newtheorem{mydef}{Definition}
\newtheorem{prop}{Proposition}
\newtheorem{coroll}{Corollary}
\newcommand{\jed}[1]{\ensuremath{~\mathrm{#1}}} 
\newcommand{\m}[1]{\ensuremath{\mathbf{#1}}} 
\newcommand{\cmax}{\ensuremath{C_{\text{max}}}}
\newcommand{\hi}{\ensuremath{I_\mathcal{MC}|_{\mathcal{X}_i = 3}}}
\newcommand{\midlvl}{\ensuremath{I_\mathcal{MC}|_{\mathcal{X}_j = 2}}}
\newcommand{\lo}{\ensuremath{I_\mathcal{MC}|_{\mathcal{X}_k = 1}}}
\begin{document}
\title{On Solving Non-preemptive Mixed-criticality Match-up Scheduling Problem with Two and Three Criticality Levels}
\date{}
\author{Antonin Novak\inst{1,2}\footnote{The corresponding author: \texttt{novakan9@fel.cvut.cz}}, Premysl Sucha\inst{2}, Zdenek Hanzalek\inst{1,2}}
\institute{Czech Institute of Informatics, Robotics and Cybernetics, \\Czech Technical University in Prague, CZ \and Department of Control Engineering, Faculty of Electrical Engineering, \\Czech Technical University in Prague, CZ}
\maketitle
\begin{abstract}
In this paper, we study an \mbox{$\mathcal{NP}$-hard} problem of a single machine scheduling minimizing the makespan, where the mixed-critical tasks with uncertain processing time are scheduled. We show the derivation of \mbox{F-shaped} tasks from the probability distribution function of the processing time, then we study the structure of problems with two and three criticality levels for which we propose efficient exact algorithms and we present computational experiments for instances with up to $200$ tasks. Moreover, we show that the considered problem is approximable within a constant multiplicative factor.
\end{abstract}



\section{INTRODUCTION} \label{sec:intro}

The communication buses in modern vehicles are an essential part of advanced driver assistants. Those systems depend on the data gather by sensors, such as LIDAR, cameras, and radars. The data about the surrounding environment are communicated through communication buses to ECUs (\textit{Electronic Control Units}) where they are processed, and appropriate actions are taken. For example, if an obstacle is detected in front of the vehicle, the car starts break automatically. Not only driver assistants rely on the communication. Different ECUs are responsible for running car as a whole. The fuel is injected accordingly to the current combustion and outside conditions, mod- ern cars with drive-by-wire system steer via electronic signals, even the windows are controlled by the central infotainment system.

The modern vehicle is considered as a fault-tolerant and dependable system. If one part of it breaks or does not work as expected, the human life is threatened. Since the intra-vehicular communication is the key element of the car, it is subject to safety certification. Safety certification is a process, where the manufacturer proves that his safety-critical systems such as autonomous driving are working correctly to a high degree of assurance. If they are not able to demonstrate the correct behavior of the central communication bus, then the whole certification process breaks down.

Traditionally, event-triggered communication protocols such as CAN (\textit{Controller Area Network}) are commonly used. In the event-triggered environment, the actions are performed \textit{on-demand}, i.e. triggered by some event. The communication is governed by scheduling policies that react to the observed situations during the run time execution. The capabilities of driver assistance systems are rapidly improving; hence the amount of data transferred through the network in a vehicle is growing. Traditional event-triggered protocols like CAN were not designed for a high data throughput; therefore their usage in modern cars is limited. Moreover, the response time analysis (i.e. the analysis of the behavior of the system) in real-life event-triggered communication systems including gateways and precedence relations is a very complex problem, therefore the safety certification of systems utilizing event-triggered environment is a difficult task.

\begin{figure*}[ht]
\centering
\includegraphics[width=0.8\textwidth]{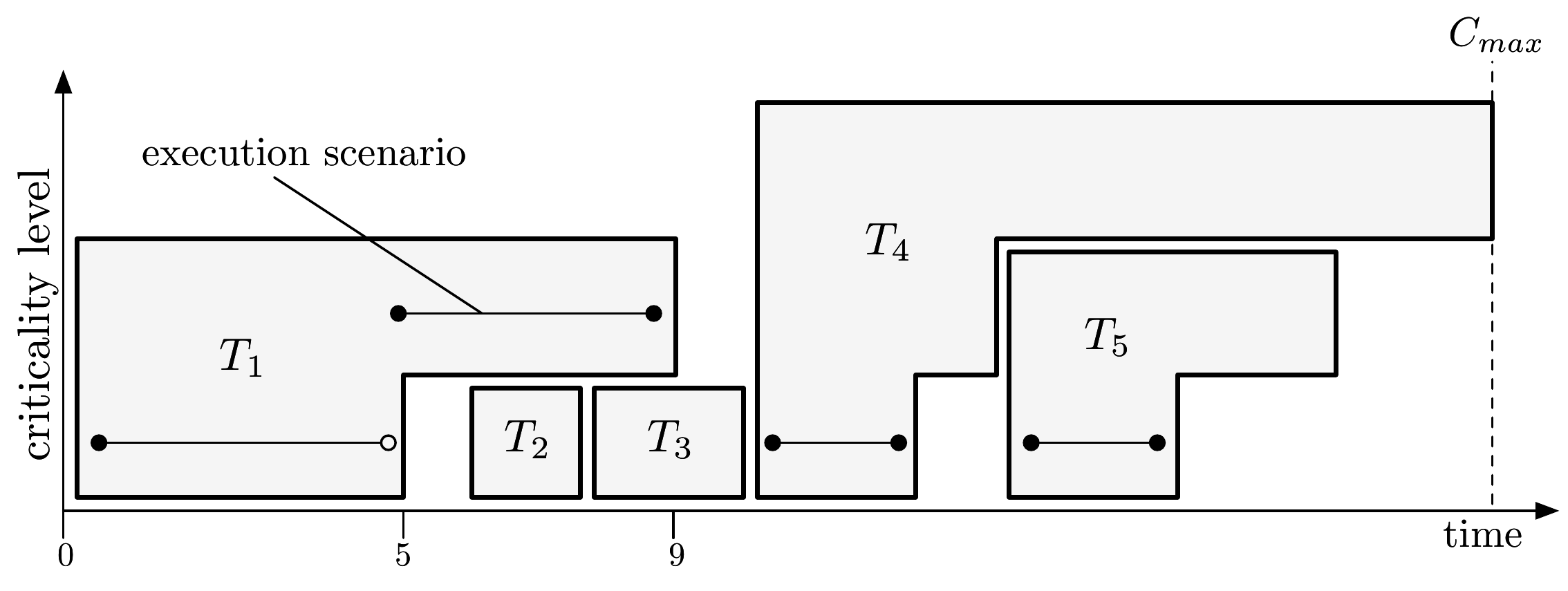} 
\caption{A feasible schedule of F-shaped tasks with a runtime execution scenario denoted by the black line.}
\label{fig:feasible}
\end{figure*}

Messages in time-triggered communication are transferred through the network at specific times prescribed by a static pre-computed static schedule. When constructing the schedule, a designer imposes a set of constraints the communication that is met in every feasible solution to the scheduling problem. Therefore, the certification of the system is be achieved via showing the feasibility of the produced communication schedule. This advantage of time-triggered environments leads to the design of new protocols that includes time-triggered communication for safety-critical systems. For example, FlexRay bus is used nowadays in the automotive industry (e.g. Porsche Panamera, Nissan Infinity Q50). The scheduling of FlexRay static segment can be solved very efficiently due to \cite{dvorak2016using}. Moreover, the modern time-triggered buses such as \textit{Time-Triggered Ethernet} \cite{kopetz2005time} offer a high throughput and determinism guarantees and enable applications like valet parking and autonomous driving.

One of the disadvantages of time-triggered protocols is their non-flexibility. For example, the static schedule does not take into consideration the message retransmission. The retransmission occurs when a highly critical message is not delivered e.g. due to electromagnetic noise. Typically in the complex systems functionalities with different criticality co-exists on a single bus. Suppose a typical case in a car with messages with three different levels of criticality:
\begin{itemize}
                \item messages with high criticality (criticality level 3) are used for safety-related functionalities (their failure may result in death or serious injury to people), such as steering and braking;
                \item messages with medium criticality (criticality level 2) are used for mission-related functionalities (their failure may prevent a goal-directed activity from being successfully completed), such as combustion engine control;
                \item messages with low criticality (criticality level 1) are used for infotainment functionalities, such as audio playback.    
\end{itemize}

Possible solution how to enable message retransmissions in static time-trigg\-ered schedules is to allocate more processing time for each message to account for possible retransmissions. If no retransmission occurs during an actual execution, then the resource is idle until the start time of the next message. However, since retransmissions are not that frequent, the average resource utilization is low.

We use a different strategy. We build static schedules that allow message retransmission to some degree. Extra time needed for retransmissions is compensated by skipping less critical messages (e.g. in our example, breaking versus audio playback). Even though this is a very elegant solution that achieves efficient resource utilization, the price we pay for it is that it modifies traditional scheduling problem into the challenging one -- the schedule has to assume alternative schedules based on the observed runtime scenario. There is an exponential number of possible runtime scenarios, and for each of them, the static schedule needs to be well-defined.

For this automotive application, the criticality of a message corresponds to its maximum number of possible (re)transmissions. See an example of a static schedule that accounts for retransmissions in Fig.~\ref{fig:feasible}. There $T_2$ and $T_3$ have low criticality, and no retransmissions are allowed. $T_1$ and $T_5$ correspond to messages with medium criticality; thus they can be re- transmitted once. The most critical message is $T_4$, that can be retransmitted twice. The retransmission of the messages causes a prolongation of the processing time that depicted in levels on the vertical axis. The top level of each message represents its WCET (\textit{the worst case execution time}), so this is the time that it takes to transmit the message under the most pessimistic conditions. The prolongations are compensated by skipping less critical messages. With this mechanism, the successful transmission of highly critical messages is guaranteed while in the average case runtime scenario the resource (i.e. communication bus) is efficiently utilized. Schedules with three criticality levels arise from the application in the automotive domain. The adaptation of IEC 61508 safety standard \cite{bell2006introduction},  the \textit{ASIL}, defines the application levels with a hazard assessment corresponding to three \textit{Safety Integrity Levels}.  

Scheduling of safety-critical messages on this time-triggered network can be modeled as a scheduling problem where tasks having a set of different processing times represent messages, and the resource is a communication channel in the network. A solution of the scheduling problem is given by a schedule that switches to the higher criticality level when a prolongation of a task occurs. After its successful completion, it matches-up with the original schedule. The trade-off between the safe and efficient schedules is achieved by skipping less critical messages when the prolongation of a more critical one takes place.

The problem of mixed-criticality message retransmission in time-triggered environments leads to an interesting combinatorial problem, where we are given a set of shapes that are aligned on the left side with the right side that is jagged  (see an example in Fig.~\ref{fig:feasible}). The goal is to pack these shapes as tight as possible so that they do not overlap.

\subsection{Contribution and Paper Outline}
In this paper, we solve the scheduling problem of message retransmission in time-triggered environments. The objective is to find a static schedule that accounts for unforeseen message retransmissions while minimizing the length occupied by time-triggered communication. 
The uncertainty about the processing time is modeled using an abstraction that considers F-shaped tasks. We show the relation be- tween F-shaped tasks and the underlying probability distribution functions. Furthermore, we show a new complexity result that establishes the membership of the considered problem into $\mathcal{APX}$ complexity class, and we provide an approximation algorithm. We study the characterization of the set of optimal solutions for the problem with two criticality levels. Finally, we propose efficient exact algorithms for problems with two and three criticality levels, which solve instances with up to 200 tasks, beating the best-known method by a large margin.

The rest of the paper is organized as follows. In Sec.~\ref{sec:relwork} we survey the related work. In Sec.~\ref{sec:mcproblem} we show the relation between F-shaped tasks and discretization of cumulative probability distribution functions. In Sec.~\ref{sec:genprop} we prove approximability of the problem. In Sec.~\ref{sec:two} and \ref{sec:three} we show properties of the problem with two and three criticality levels and we propose efficient exact algorithms. Finally, in Sec.~\ref{sec:results} we present computational results on sythetic data demonstrating the efficiency of the proposed approach.

\section{RELATED WORK} \label{sec:relwork}



The exhaustive survey on mixed-criticality in real-time systems is presented by \cite{burns2013mixed}. This research is traditionally concentrated around event-triggered approach to scheduling. In the seminal paper \cite{vestal2007preemptive} Vestal proposed a method that assumes different WCETs (\textit{the worst case execution time}) obtained for discrete levels of assurance. Apart from this proposition, the paper presents modified preemptive fixed priority schedulability analysis algorithms. However, the preemptive model is not suitable for communication protocols, and it significantly changes the scheduling problem. \cite{baruah2010towards} formulated the basic model of mixed-criticality systems. They study MC schedulability problem with two criticality levels under special restrictive cases in the event-triggered environment. \cite{theis2013schedule} argued that mixed-criticality shall be pursued in time-triggered systems. Baruah's approach \cite{baruah2011certification} in the time-triggered environment assumed preemptive tasks with up to two criticality levels. It makes it unsuitable for communication protocols since the preemption would be costly. \cite{hanzalek2016mc} proposed the problem of non-preemptive mixed-criticality match-up scheduling motivated by scheduling messages on a highly used communication channel. They showed how a schedule with F-shaped tasks can be used to deal with a task disruption by skipping less critical tasks. They provide the relative order MILP model for $1|r_j,\tilde{d}_j,mc=\mathcal{L},mu|\cmax$ scheduling problem, but it can deal with instances with only about 20 messages.

The concept of match-up scheduling was introduced by \cite{bean1991matchup}. In a case of a disruption, the goal is to construct a new schedule that matches the original one at some  point in the future. This concept is mostly studied in the context of manufacturing problems \cite{qi2006disruption}. 


Taking broader perspective, the problem can be viewed as a case of robust and stochastic optimization due to uncertainty about transmission times while satisfying safety requirements. \cite{bertsimas2011theory} surveys robust versions of various optimization problems, but rather continuous than discrete ones. The field of stochastic optimization is reviewed by \cite{sahinidis2004optimization}. They state that integer variables introduced to stochastic programming complicate its solution, yielding suboptimal results even for small-sized problems.


As in our problem, some of the less critical messages are allowed to be skipped, the problem is related to the scheduling with a job rejection. \cite{shabtay2013survey} reviews offline scheduling with a job rejection. These approaches consider two criteria, a measure associated with schedule quality and the cost incurred by rejected jobs. The solution to this problem is a set of accepted jobs and a set of rejected jobs. However, rejected jobs cannot be executed in any execution scenario; thus this model is not suitable for communication protocols mentioned in our motivation. Our problem also embeds scheduling with setup times. \cite{allahverdi2015third} shows that problems with \textit{sequence-dependent} (i.e. where a setup time is given for a pair of consecutive tasks) setup times are mostly studied. However, in our problem the feasible start time of a task depends on a permutation of \textit{all} preceding tasks, not just the immediate predecessor; therefore it represents a more general problem.


To the best of our knowledge, the problem of offline non-preemptive mixed-criticality match-up scheduling was addressed by \cite{hanzalek2016mc} only, but it lacks an efficient solution method which is suggested in this paper.



\section{NON-PREEMPTIVE MIXED-CRITICALITY SCHEDULING} \label{sec:mcproblem}

\begin{figure*}[ht]
\centering
\begin{subfigure}[t]{0.42\textwidth}
\centering
    \includegraphics[width=\textwidth]{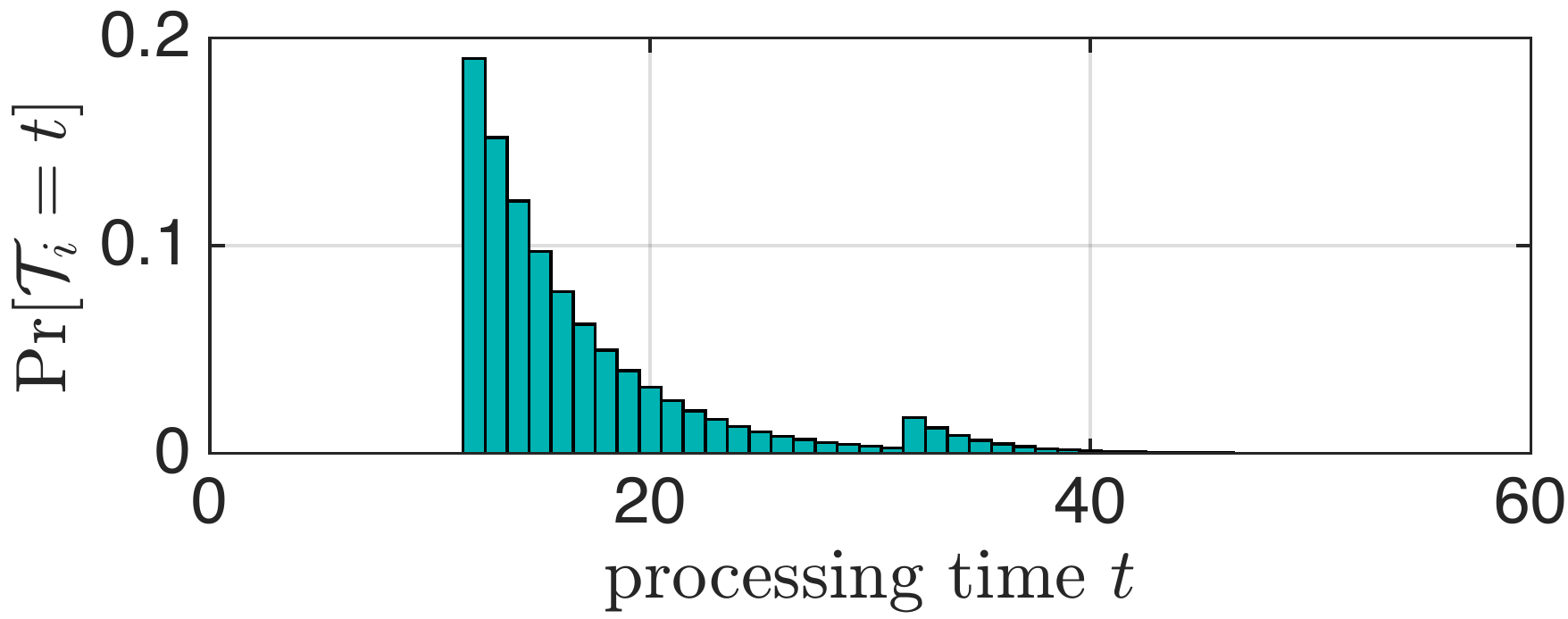} 
 \caption{A discrete probability distribution over a set of \mbox{processing} times.}
 \label{fig:pst_pdf}
\end{subfigure}
\quad
\quad
\quad
\quad
\begin{subfigure}[t]{0.42\textwidth}
\centering
   \includegraphics[width=\textwidth]{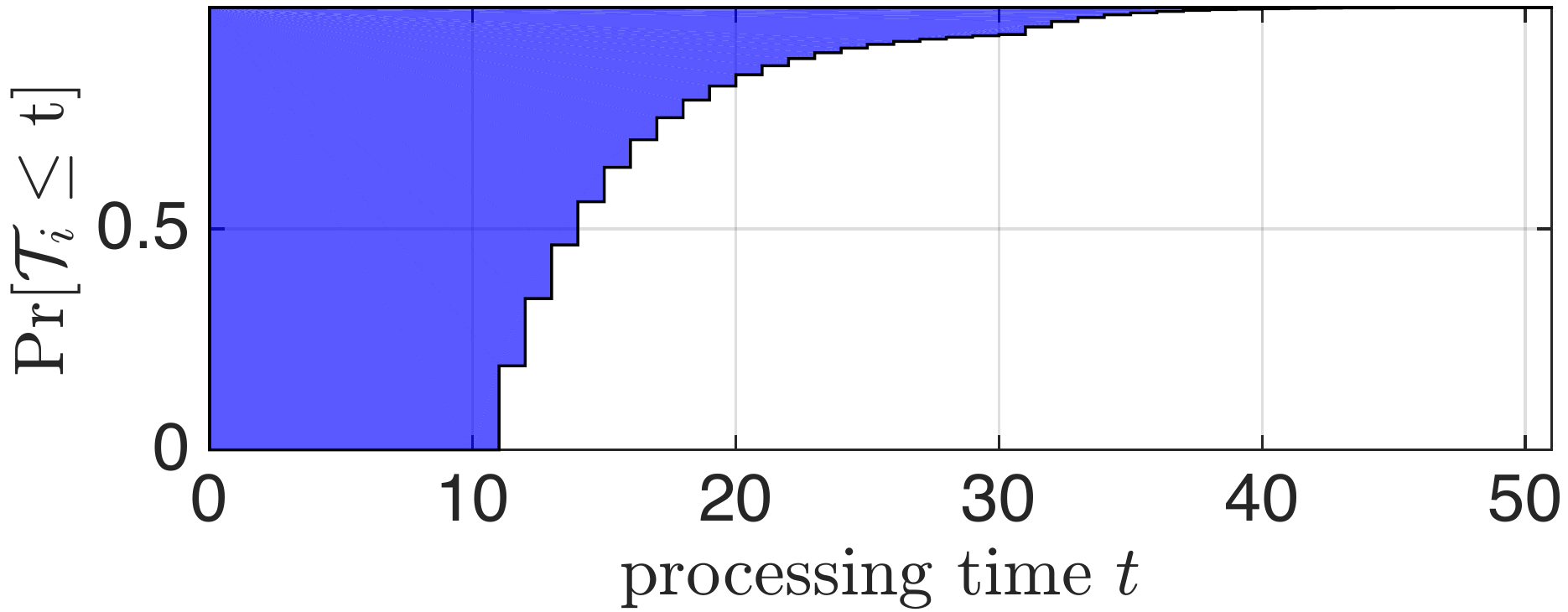}
\caption{Corresponding cumulative distribution function.}
\label{fig:pst_cdf}
\end{subfigure}

\vspace{1em}

\begin{subfigure}[b]{0.42\textwidth}
  \centering
  \includegraphics[width=1\linewidth]{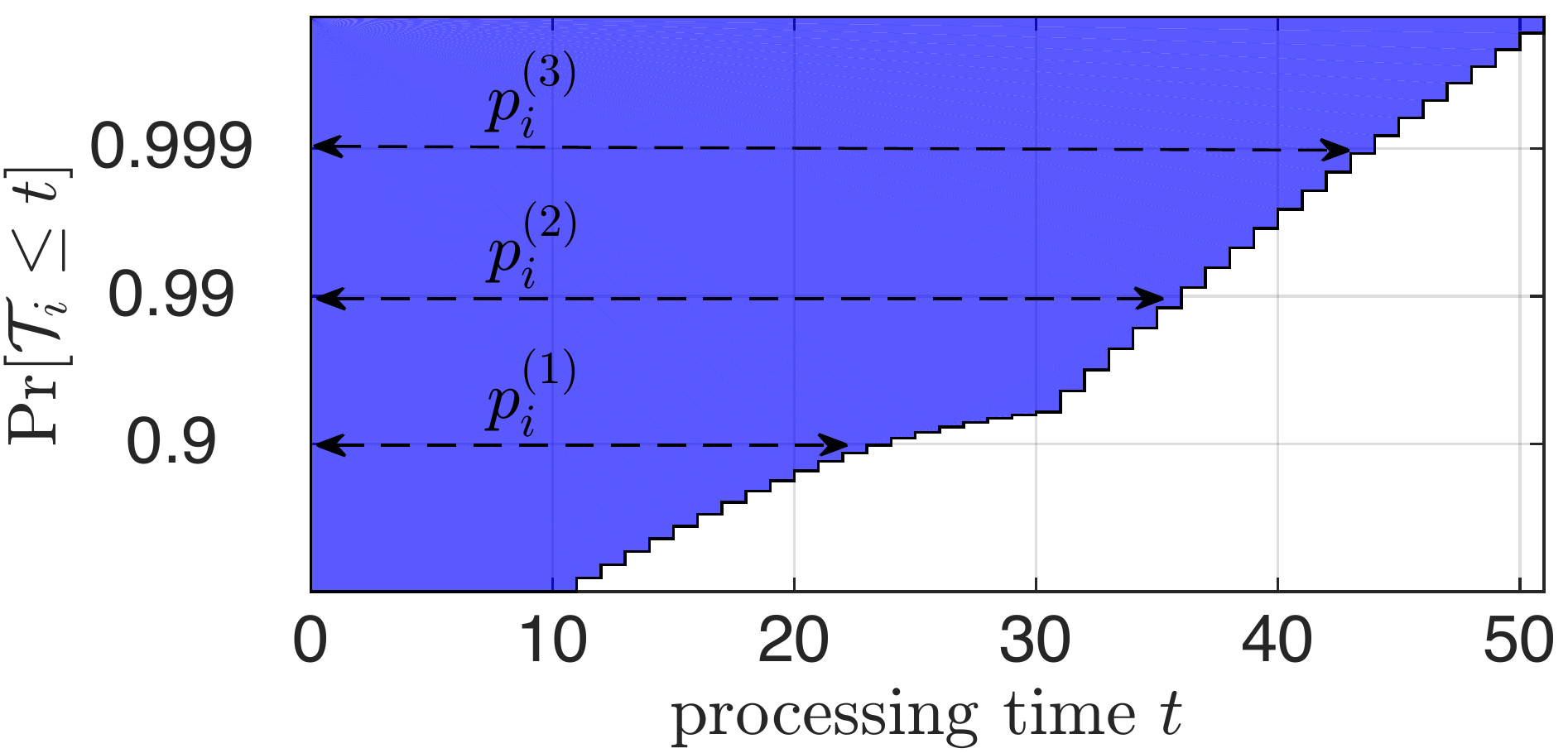}
  \caption{Discretization of a cumulative distribution function}
  \label{fig:fshape-sampling}
\end{subfigure}%
\quad
\quad
\quad
\quad
\begin{subfigure}[b]{0.42\textwidth}
  \centering
  \includegraphics[width=1\linewidth]{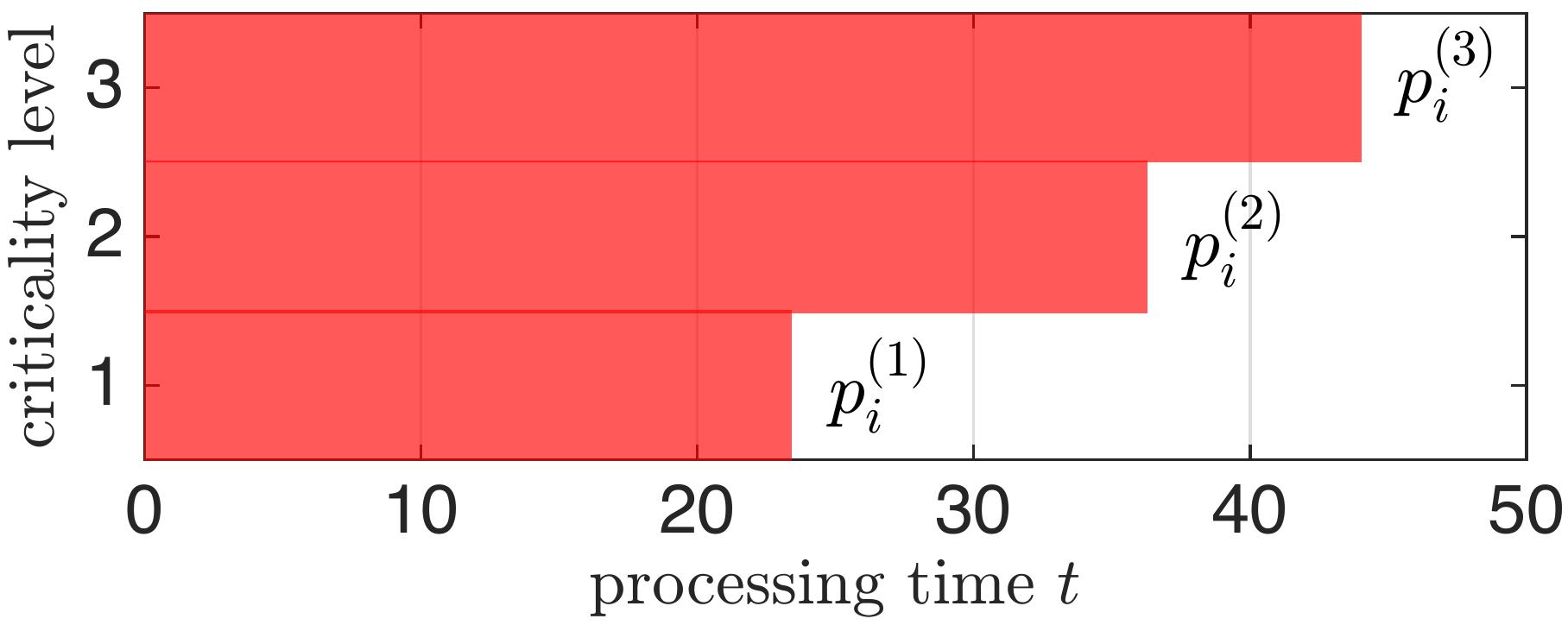}
  \caption{Resulting F-shaped task with $\mathcal{X}_i=3$}
  \label{fig:fshape}
\end{subfigure}
\caption{Discretized cumulative distribution function forms an F-shaped task}
\label{fig:disretization}
\end{figure*}

We assume that a set of communication messages is given to be scheduled on a single communication bus segment. For each message $T_i$, the criticality $\mathcal{X}_i \in \mathbb{N}$ is specified. It denotes the number of allowed transmissions. Each message (task) is specified by its \textit{criticality levels}. For each \emph{criticality level} $\ell \in \{1, \ldots, \mathcal{X}_i\}$, we define the associated processing time with this level. See an example in Fig.~\ref{fig:feasible}. Here, $T_1$ has criticality $\mathcal{X}_1 = 2$; therefore it can be retransmitted once. The processing time at the first level is given by its BCET (\textit{the best case execution time}) while the processing time at the second level is its WCET (\textit{the worst case execution time}). During the run time execution, exactly one processing time of the message is realized; however, it is not known in advance which it will be.

We can view processing time prolongations as a retransmission of the whole message content. However, this mixed-criticality scheduling model is useful also for scheduling of computational tasks, where the exact computational time is not known in advance, but only a probability distribution is known.  Let us consider the processing time of task $T_i$ to be a random variable $\mathcal{T}_i$. Let us assume an arbitrary probability distribution over a discrete set of processing times from $\mathbb{N}$ for a particular task stating $\text{Pr}[\mathcal{T}_i=t]$. The same information given by the probability distribution is captured by the CDF (\textit{cumulative distribution function}) $\mathcal{F}_i$ giving the probability that processing time $\mathcal{T}_i$ is at most $t$. See Figs.~\ref{fig:pst_pdf} and \ref{fig:pst_cdf} for such an example. Corresponding processing times $p_i^{(\ell)}$ for each criticality level $\ell$ are taken from $\mathcal{F}_i$ as $p^{(\ell)}_i = \mathcal{F}^{-1}_i(c_\ell)$, where $ \mathcal{F}^{-1}_i$ is the quantile function. The criticality of a task is a user-defined parameter. For example, if we identify criticality levels with a safety standard IEC 61508 SIL (\textit{Safety Integrity Levels}) \cite{bell2006introduction}, then the task criticality $\mathcal{X}_i$ is given by the SIL of the functionality carried out by the content and $c_\ell$ is defined as $1-$\emph{probability of failure} defined by SIL $\ell$. 

Processing times obtained according to criticality levels then form a single task like the one depicted in Fig.~\ref{fig:fshape}. Since CDFs are non-decreasing functions, a set of processing times $p^{(\ell)}_i$ yields shapes like the F letter rather than ordinary rectangles, hence the name F-shape. See an example in Fig~\ref{fig:disretization}. There we see discretization for a task with criticality three at corresponding levels $1$, $2$ and $3$ with the vertical axis on the logarithmic scale.

\subsection{Execution Policy}
The solution to the scheduling is a feasible static schedule of the given set of F-shaped tasks. Consider a particular example of the schedule with tasks having up to three levels of criticality that is shown in Fig.~\ref{fig:feasible}. A feasible schedule with F-shaped tasks describes alternative schedules for any realization of the processing time of messages. Observed prolongations of more critical messages are compensated by skipping execution of less critical messages.

The black line denotes a scenario, where $T_1$ was disrupted once. The actual processing time of $T_1$ was $9$ instead of $5$ due to a disturbance. When the disruption occurred, execution switched to the next higher criticality level. There, by the assumption, the execution was successful with a probability given by the $\ell = 2$ criticality level. After upon $T_1$ finished, the execution matched-up back with the lowest criticality level. In general, if a task $T_i$ is prolonged to level $\ell$, then all tasks $T_j$ for which $s_i + p^{(1)}_i \leq s_j < s_i + p^{(\ell)}_i$ are not executed. Therefore, in this execution scenario, after $T_1$ finished, $T_4$ was up next. Moreover, if we unify the F-shape from Fig.~\ref{fig:fshape} with task $T_4$ in Fig.~\ref{fig:feasible}, then we can say that $T_5$ will be executed with very high probability of $0.99$, but in rare cases, it won't be executed since $T_4$ is more critical and needs more time to complete.
\section{PROBLEM STATEMENT}
We assume a set of non-preemptive F-shaped tasks $I_\mathcal{MC} = \{T_1, \ldots, T_n\}$ to be processed on a single machine. We define an F-shaped task and its criticality as follows:
\begin{mydef}[F-shape]
The F-shape $T_i$ is a pair $(\mathcal{X}_i, \m{P}_i)$ where $\mathcal{X}_i\in\{1, \ldots, \mathcal{L}\}$, $\mathcal{L}\in\mathbb{N}$ is the task criticality and $\m{P}_i\in\mathbb{N}^{\mathcal{X}_i}$, $\m{P}_i = (p_i^{(1)}, p_i^{(2)}, \ldots, p_i^{(\mathcal{X}_i)})$ is the vector of processing times such that 
$$p_i^{(1)} < p_i^{(2)} < \ldots < p_i^{(\mathcal{X}_i)}.$$
\end{mydef}
The F-shape is an abstraction for non-preemptive tasks with multiple different processing times. See for example $T_4$ in Fig.~\ref{fig:feasible}. It is F-shaped task with criticality $\mathcal{X}_4 = 3$; therefore it has 3 different processing times. Having a set $I_\mathcal{MC}$ of F-shaped tasks, we define the feasible schedule as follows:
\begin{mydef}[Feasible Schedule]
By the schedule for a set of F-shaped tasks $I_\mathcal{MC} = \{T_1, T_2, \ldots, T_n\}$ we refer to an assignment $(s_1, s_2, \ldots, s_n) \in\mathbb{N}^n$. We say that schedule $(s_1, s_2, \ldots, s_n)$ for $I_\mathcal{MC}$ is feasible if and only if $\,\forall i,j \in \{1, \ldots, n\}, i\neq j:$ 
$$(s_i + p^{(\min\{ \mathcal{X}_i, \mathcal{X}_j\})}_i \leq s_j) \,\vee\, (s_j + p^{(\min \{ \mathcal{X}_i, \mathcal{X}_j\})}_j \leq s_i).$$
\end{mydef}
Feasibility of a schedule with F-shaped tasks requires that tasks do not overlap on any criticality level. For example in Fig.~\ref{fig:feasible}, since $T_5$ follows after $T_4$, it cannot start earlier than $s_4 + p^{(2)}_4$, since $\min\{\mathcal{X}_4, \mathcal{X}_5\} = 2$  is the highest common criticality level of $T_4$ and $T_5$.

We deal with the problem of finding a feasible schedule for a set of F-shaped tasks with criticality at most $\mathcal{L}$ such that the makespan (i.e. $\max s_i + p_i^{(\mathcal{X}_i)}$) is minimized. In the three-field Graham-Blazewicz notation it is denoted as $1|mc=\mathcal{L},mu|\cmax$, where $mc=\mathcal{L}$ stands for the mixed-criticality aspect of tasks of maximal criticality $\mathcal{L}$ and $mu$ stands for the match-up. This problem is known to be $\mathcal{NP}$-hard in the strong sense even for $mc=2$ (two criticality levels) as shown by reduction from 3-Partition Problem in \cite{hanzalek2016mc}.

\section{GENERAL PROPERTIES} \label{sec:genprop}


Since the problem $1|mc=2,mu|\cmax$ is strongly $\mathcal{NP}$-hard, it does not admit FPTAS unless $\mathcal{P}=\mathcal{NP}$. However, we show that the problem is polynomial-time approximable within a constant multiplicative factor.

\begin{prop}[Approximability]\label{prop:approx}
For any given fixed $\mathcal{L}$, the problem $1|mc=\mathcal{L},mu|\cmax$  is contained in $\mathcal{APX}$ complexity class.
\end{prop}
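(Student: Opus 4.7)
The plan is to exhibit a polynomial-time algorithm whose makespan is within factor $\mathcal{L}$ of the optimum. Since $\mathcal{L}$ is fixed, this is a constant and places the problem in $\mathcal{APX}$.

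The algorithm is the most naive one imaginable: fix an arbitrary permutation of the $n$ tasks and pack them back-to-back treating each $T_i$ as if it had processing time equal to its WCET $p_i^{(\mathcal{X}_i)}$. Feasibility is automatic because consecutive tasks are separated by at least the WCET of the earlier one, which upper bounds $p^{(\min\{\mathcal{X}_i,\mathcal{X}_j\})}$ for every neighbor. The makespan produced is
$$C^{\text{WCET}}_{\text{max}} \;=\; \sum_{i=1}^{n} p_i^{(\mathcal{X}_i)}\;=\;\sum_{k=1}^{\mathcal{L}}\,\sum_{i:\,\mathcal{X}_i=k} p_i^{(k)}.$$

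The key step is a family of $\mathcal{L}$ lower bounds on the optimum. For each criticality level $k\in\{1,\dots,\mathcal{L}\}$, let $S_k=\{i:\mathcal{X}_i\geq k\}$. Any two tasks $T_i,T_j\in S_k$ satisfy $\min\{\mathcal{X}_i,\mathcal{X}_j\}\geq k$, so by the feasibility definition they do not overlap at level $k$, meaning each one occupies a disjoint interval of length at least $p_i^{(k)}$ inside $[0,C_{\text{max}}]$. Consequently, in any feasible schedule,
$$C_{\text{max}}^{\star} \;\geq\; \sum_{i\in S_k} p_i^{(k)} \;\geq\; \sum_{i:\,\mathcal{X}_i=k} p_i^{(k)}, \qquad k=1,\dots,\mathcal{L}.$$
Summing these $\mathcal{L}$ inequalities and comparing with the expression for $C^{\text{WCET}}_{\text{max}}$ yields $C^{\text{WCET}}_{\text{max}}\leq \mathcal{L}\cdot C_{\text{max}}^{\star}$, so the algorithm is an $\mathcal{L}$-approximation and runs in $O(n)$ time once the permutation is chosen.

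I do not expect any real obstacle; the only subtle point is noticing the correct family of lower bounds. A naive bound such as $\sum_i p_i^{(1)}$ is too weak because the ratio $p_i^{(\mathcal{X}_i)}/p_i^{(1)}$ is unbounded in the input, but stratifying by criticality level and exploiting the fact that same-level (or higher) tasks must be non-overlapping at that level gives a tight enough bound to absorb the WCET overestimate with a loss of only $\mathcal{L}$. Since the reduction from feasibility at level $\min\{\mathcal{X}_i,\mathcal{X}_j\}$ to non-overlap at any lower level is monotone in $k$, the argument is essentially a one-line counting exercise once the lower bounds are in place.
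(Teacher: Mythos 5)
Your proposal is correct and is essentially the paper's argument: the paper's \emph{LCF} schedule likewise achieves makespan $\sum_{k}\sum_{i:\mathcal{X}_i=k}p_i^{(k)}$ (the sum of WCETs) and compares it against the same per-level lower bounds $\sum_{i:\mathcal{X}_i\geq k}p_i^{(k)}\leq OPT$ to obtain the factor $\mathcal{L}$. The only cosmetic differences are that you use an arbitrary order with explicit WCET gaps rather than a least-criticality-first left-shifted schedule, and you sum the $\mathcal{L}$ lower-bound inequalities where the paper takes their maximum; both yield the identical bound.
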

\begin{proof}
Suppose the algorithm \emph{LCF (Least Criticality First)} that takes an input instance $I_\mathcal{MC}$ and schedules tasks in a non-decreasing sequence by their criticalities without waiting. Then the makespan of resulting schedule is 
\begin{equation*}
\begin{split}
LCF(I_\mathcal{MC}) = \sum_{ i | \mathcal{X}_i = 1} p^{(1)}_i + \sum_{ i | \mathcal{X}_i = 2} p^{(2)}_i + \ldots + \sum_{ i | \mathcal{X}_i = \mathcal{L}} p^{(\mathcal{L})}_i
\end{split}
\end{equation*}
A sum of processing times on a given criticality level over a set of tasks is a lower bound on the makespan. Therefore we have
\begin{equation*}
\begin{split}
\max\{\sum_{ i | \mathcal{X}_i \geq 1} p^{(1)}_i, \sum_{ i | \mathcal{X}_i \geq 2} p^{(2)}_i, \ldots, \sum_{ i | \mathcal{X}_i \geq \mathcal{L}} p^{(\mathcal{L})}_i\}\leq \\ \leq OPT(I_\mathcal{MC}) 
\leq LCF(I_\mathcal{MC}) \leq \mathcal{L}\cdot OPT(I_\mathcal{MC}).
\end{split}
\end{equation*}
where $OPT(I_\mathcal{MC})$ denotes the optimal makespan of $I_\mathcal{MC}$ problem instance.
\end{proof}

In fact, this result shows more than that there exists a polynomial-time algorithm producing schedules with a constant bounded quality. For example, for the problem with $\mathcal{L}=2$ criticality levels, actually any left-shifted schedule will be at most twice as worse as the optimal makespan since $LCF$ actually produces the worst ordering of tasks in terms of the makespan. 

In the following sections, we present exact algorithms for the problem with $2$ and $3$ criticality levels. Due to the $\cmax$ criterion, it can be shown that the search for an optimal solution can be reduced to finding a permutation of tasks. Therefore, any optimal schedule is given by a permutation of tasks $\pi$. Hence we denote the makespan of the left-shifted schedule of permutation $\pi$ by $\cmax(\pi)$. In Section~\ref{sec:two} we give a characterization of the set of optimal permutations for problem $1|mc=2,mu|\cmax$ and we introduce a MILP model utilizing it. In Section~\ref{sec:three}, we introduce an operator acting on F-shapes, and we show how the optimal solutions for problems with two and three criticality levels are related.

\section{TWO CRITICALITY LEVELS} \label{sec:two}
We showed that optimal solutions to $1|mc=\mathcal{L},mu|\cmax$ are given by a permutation $\pi$ of tasks. For the problem with two criticality levels, the optimal permutations can be characterized more precisely. Let us refer to tasks with criticality $\mathcal{X}_i = 2$ as \textsc{Hi}-tasks and tasks with criticality $\mathcal{X}_j=1$ as \textsc{Lo}-tasks. The key structure of the optimal permutations are \textit{covering blocks}:

\begin{mydef}[Covering Block]
For any given feasible schedule $(s_1, \ldots, s_n)$, a \textsc{Hi}-task $T_i$ and a \textsc{Lo}-task $T_j$ we say that $T_j$ is covered by $T_i$, denoted as $T_i \in \text{cov}(T_j)$, if and only if $s_i +p^{(1)}_i \leq s_j < s_i + p^{(2)}_i$. The covering block $B_i$ is then the \textsc{Hi}-task $T_i$ and the set of all \textsc{Lo}-tasks covered by $T_i$.
\end{mydef}
See an example in Fig.~\ref{fig:feasible}. There $T_{1}$ is covering $T_2$ and $T_3$. All these tasks form a covering block. Although the definition of covering block given above is meant for the problem with two criticality levels, the notion of \textit{covering} can be generalized for more criticality levels. We assign a \textit{length} to each covering block. The length is given as the maximum between the processing time $p^{(2)}_i$ of the \textsc{Hi}-task $T_i$ and the sum of processing times of tasks covered by $T_i$ plus the processing time of $T_i$ at the first level $p^{(1)}_i$. 
\begin{prop}[Covering Block Length] \label{prop:lo-comm}
Given the covering block $B_i$, its length defined as  
$$\max\{p^{(1)}_i + \sum_{T_j|\, T_i\in \text{cov}(T_j)} p^{(1)}_j,  \quad p^{(2)}_i\}$$
 is invariant with respect to  the ordering of \textsc{Lo}-tasks $T_j$ for which $T_i \in \text{cov}(T_j)$.
\end{prop}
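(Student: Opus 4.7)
The plan is to split the argument into two observations that together yield the invariance. First, I would note that the formula itself does not depend on the ordering: $p_i^{(1)}$ and $p_i^{(2)}$ are properties of the fixed Hi-task $T_i$, and the sum $\sum_{T_j \mid T_i \in \text{cov}(T_j)} p_j^{(1)}$ is symmetric in the indices $j$ by commutativity of addition, so the max of the two quantities is trivially invariant under any permutation of the Lo-tasks in $B_i$. The only real content is therefore to check that this formula actually equals the length of the block in every such ordering.

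To establish the second point, I would fix an arbitrary permutation $T_{j_1}, T_{j_2}, \ldots, T_{j_k}$ of the Lo-tasks covered by $T_i$ and look at the induced left-shifted schedule of $B_i$. Since $T_i$ starts at $s_i$, and for every Lo-task $T_{j_l}$ we have $\min\{\mathcal{X}_i, \mathcal{X}_{j_l}\}=1$, feasibility forces the first Lo-task in the block to start no earlier than $s_i + p_i^{(1)}$, and a left-shifted schedule places it exactly there. The remaining Lo-tasks are packed contiguously by the same argument (their pairwise common criticality is $1$, so no gaps are needed), so the last Lo-task finishes at $s_i + p_i^{(1)} + \sum_{l=1}^{k} p_{j_l}^{(1)}$. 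The Hi-task itself finishes on its highest level at $s_i + p_i^{(2)}$. The end of the block is the later of the two, and subtracting $s_i$ recovers the claimed expression.

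I would close by observing that both finishing times computed above were obtained without reference to the specific order of the $T_{j_l}$, so the block length is the same for every permutation of the covered Lo-tasks, which is exactly the statement of the proposition.

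The hard part will be essentially notational: there is no real combinatorial obstacle here, only the need to explicitly justify that a left-shifted feasible schedule inside $B_i$ places the first Lo-task at $s_i + p_i^{(1)}$ and the others contiguously afterwards, so that the block's total duration depends only on the multiset of processing times in $B_i$ and not on their arrangement.
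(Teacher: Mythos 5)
Your proof is correct and matches the paper's reasoning: the paper justifies the proposition with the single remark that the ordering of covered \textsc{Lo}-tasks does not affect the block length ``since all \textsc{Lo}-tasks are running without waiting,'' which is exactly the contiguity argument you spell out in detail. Your version simply makes explicit the computation of the two candidate finishing times $s_i + p^{(1)}_i + \sum_{l} p^{(1)}_{j_l}$ and $s_i + p^{(2)}_i$ and the observation that both are permutation-independent.
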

Clearly, the ordering of \textsc{Lo}-tasks $T_j$ for which $T_i \in \text{cov}(T_j)$ does not affect the block length since all \textsc{Lo}-tasks are running without waiting. Furthermore, we say that task $T_j$ is \textit{fully covered} by the block $B_i$, if $B_i = p^{(2)}_i$ and $T_i \in \text{cov}(T_j)$. If exists a task covered by the block $B_i$ that is not fully covered, then we say that $B_i$ is \textit{saturated}. The makespan $C_{max}$ of the schedule is given by a permutation of covering blocks. However, actually any permutation of covering blocks contributes to the makespan by the same amount; hence it is not subject to optimization.
\begin{prop}[Interchangebility] \label{prop:blocks-comm}
For every instance of the problem $1|mc=2,mu|\cmax$ there exists an optimal solution that is given by an arbitrary permutation of covering blocks.
\end{prop}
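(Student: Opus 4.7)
The plan is to start from any optimal left-shifted schedule, identify its decomposition into covering blocks, observe that its makespan equals the sum of the block lengths, and then verify that rearranging the blocks in any order preserves both feasibility and the makespan.

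First, I would take an optimal left-shifted schedule $\pi^{*}$ and partition the tasks into a sequence of covering blocks. Every \textsc{Hi}-task $T_i$ together with the set of \textsc{Lo}-tasks it covers forms a block $B_i$ in the sense of the definition; a \textsc{Hi}-task covering no \textsc{Lo}-task is viewed as a degenerate block of length $p_i^{(2)}$, and any \textsc{Lo}-task not covered by any \textsc{Hi}-task is treated as a singleton block of length $p_j^{(1)}$. By Proposition~\ref{prop:lo-comm}, the length of each non-degenerate block is $\max\{p_i^{(1)} + \sum p_j^{(1)},\, p_i^{(2)}\}$, which depends only on the set of tasks inside the block.

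Next, I would show that the makespan of $\pi^{*}$ equals the sum of the block lengths. In a left-shifted schedule consecutive blocks are butted against one another: inside a block, \textsc{Lo}-tasks run back-to-back over the \textsc{Hi}-task (this is what the length formula captures), and between two blocks no positive idle time can occur in an optimal schedule without contradicting left-shiftedness. Hence \cmax$(\pi^{*})=\sum_i |B_i|$, a quantity which is symmetric in the blocks.

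The key remaining step is to verify that any rearrangement of the blocks, concatenated back-to-back, is a feasible schedule. Internal feasibility within each block is inherited from $\pi^{*}$. Between two consecutive blocks $B_i$ and $B_{i'}$ in the new order, feasibility at the relevant common criticality level follows because the length of $B_i$ is by definition at least $p_i^{(2)}$, so by the time the next block starts, $T_i$ has already completed its processing at both levels $1$ and $2$; hence the non-overlap constraints at level $1$ (with the \textsc{Lo}-tasks of $B_{i'}$) and at level $2$ (with the \textsc{Hi}-task of $B_{i'}$) are both satisfied. Combined with the invariance of the sum of block lengths under permutation, this gives the desired conclusion.

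The main obstacle I expect is the careful bookkeeping of the edge cases, namely \textsc{Hi}-tasks that cover no \textsc{Lo}-task and \textsc{Lo}-tasks uncovered by any \textsc{Hi}-task; once these are absorbed into the block decomposition as degenerate blocks, the feasibility argument between consecutive blocks becomes a routine check using only the inequality $|B_i|\geq p_i^{(2)}$ supplied by Proposition~\ref{prop:lo-comm}.
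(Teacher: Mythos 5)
Your proof is correct and takes essentially the same route as the paper, which states the proposition without a formal proof and relies on exactly your observation: the makespan of a left-shifted schedule equals the sum of the covering-block lengths, a quantity invariant under permuting the blocks, while feasibility of any block order follows from each block length being at least $p_i^{(2)}$ and at least the back-to-back span of its \textsc{Lo}-tasks. Your explicit handling of the degenerate blocks and of inter-block feasibility merely fills in details the paper leaves implicit.
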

A characterization of optimal solutions for $1|mc=2,mu|\cmax$ directly follows from Proposition \ref{prop:lo-comm} and \ref{prop:blocks-comm}:
\begin{coroll}\label{cor:mc2}
The optimal solution for $1|mc=2,mu|\cmax$ is given by an assignment of \textsc{Lo}-tasks to \textsc{Hi}-tasks.
\end{coroll}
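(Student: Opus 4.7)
The plan is to combine Propositions~\ref{prop:lo-comm} and~\ref{prop:blocks-comm} directly. First, I would observe that any feasible left-shifted schedule for $1|mc=2,mu|\cmax$ decomposes into covering blocks, where each block consists of a \textsc{Hi}-task $T_i$ together with the (possibly empty) set of \textsc{Lo}-tasks covered by it, plus possibly some \textsc{Lo}-tasks that are not covered by any \textsc{Hi}-task. This decomposition is exactly captured by an assignment that maps every \textsc{Lo}-task either to the unique \textsc{Hi}-task covering it or to an \emph{unassigned} symbol (equivalently, each unassigned \textsc{Lo}-task is treated as a degenerate singleton block of length $p_j^{(1)}$).

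Next, Proposition~\ref{prop:lo-comm} guarantees that, once the assignment is fixed, the length of each covering block $B_i$ is determined by the set of \textsc{Lo}-tasks assigned to $T_i$ alone and does not depend on the internal ordering of those \textsc{Lo}-tasks. Proposition~\ref{prop:blocks-comm} then ensures that the overall makespan is invariant under any permutation of the blocks, so $\cmax$ equals the sum of the block lengths plus the processing times $p_j^{(1)}$ of the unassigned \textsc{Lo}-tasks. Combining these two facts, the makespan is a function of the assignment alone.

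The corollary follows immediately: searching for an optimal schedule reduces to searching over assignments of \textsc{Lo}-tasks to \textsc{Hi}-tasks, and from any optimal assignment one can recover an optimal schedule by arbitrarily ordering the blocks and the \textsc{Lo}-tasks within each block. The only subtlety worth a careful check is the bookkeeping for \textsc{Lo}-tasks that no \textsc{Hi}-task covers; I would handle this by enlarging the codomain of the assignment map with a dedicated \emph{unassigned} value, or equivalently by treating each such \textsc{Lo}-task as its own block. Apart from that minor technicality, the proof is essentially a direct chaining of the two preceding propositions, so I do not anticipate a genuinely hard step.
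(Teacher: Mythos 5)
Your proposal is correct and follows exactly the route the paper intends: the paper itself offers no separate proof, stating only that the corollary ``directly follows'' from Propositions~\ref{prop:lo-comm} and~\ref{prop:blocks-comm}, which is precisely the chaining you carry out. Your extra bookkeeping for uncovered \textsc{Lo}-tasks is a sensible elaboration and matches how the Covering MILP model~\ref{milp:mc2} accounts for them via the term $\sum_{j}p^{(1)}_j(1-\sum_i x_{ij})$.
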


\subsection{Covering MILP Model for $1|mc=2,mu|\cmax$} \label{milp:mc2}
The following MILP model relies on Corollary~\ref{cor:mc2}. The model assigns \textsc{Lo}-tasks to the \textsc{Hi}-tasks in order to form covering blocks such that the sum of their lengths is the minimal one. The decision variable $x_{ij}$ indicates whether the \textsc{Lo}-task $T_j$ is covered by the \textsc{Hi}-task $T_i$; therefore if $T_i \in \text{cov}(T_j)$, then $x_{ij}=1$. The makespan is then given by the sum of lengths of covering blocks and the sum of processing times of all \textsc{Lo}-tasks that are not covered.
\begin{align}
&\min \sum_{ i|\mathcal{X}_i=2} B_i + \sum_{ j|\mathcal{X}_j=1}p^{(1)}_j(1-\sum_{ i|\mathcal{X}_i=2}x_{ij}) \label{eq:makespan}\\
\text{s.t.} \notag\\
&B_i \geq p^{(1)}_i + \sum_{ j|\mathcal{X}_j=1} p^{(1)}_j x_{ij}\quad \forall i\in I_\mathcal{MC}|_{\mathcal{X}_i=2}\\
&B_i \geq p^{(2)}_i \quad \forall i\in  I_\mathcal{MC}|_{\mathcal{X}_i=2}\\
&\sum_{ i|\mathcal{X}_i=2} x_{ij}\leq 1 \quad \forall j\in I_\mathcal{MC}|_{\mathcal{X}_j=1}\\
\text{where} \notag \\
&B_i\in \mathbb{Z}^+_0 \quad \forall i\in I_\mathcal{MC}|_{\mathcal{X}_i=2} \notag \\ 
&x_{ij} \in \{0,1\} \quad \forall i \in I_\mathcal{MC}|_{\mathcal{X}_i=2}, \forall j\in I_\mathcal{MC}|_{\mathcal{X}_j=1} \notag
\end{align}

The main advantage of this model over the model proposed by \cite{hanzalek2016mc} is that it has much stronger linear relaxation. In Section~\ref{sec:results} we show that it can solve instances with about the order of magnitude more tasks. 
\section{THREE CRITICALITY LEVELS} \label{sec:three}
Although two criticality levels are often sufficient for safety-critical application and this case is frequently studied in the field of mixed-critical systems \cite{burns2013mixed}, sometimes the application naturally contains three or more criticality levels. We capture the direct relation between problems with different maximum criticality levels by introducing a transformation given bellow. It is based on the observation that omitting some criticality levels provides an instance of the problem with less criticality level while maintaining a lower bound property. Furthermore, we introduce the \textit{Bottom-up} algorithm that uses this observation. The algorithm is used then together with Covering MILP model for three criticality levels ($\mathcal{L}=3$) shown in Section~\ref{milp:mc3} to form an efficient solution method. 

The transformation is defined as $h^\pm$ restrictions:
\begin{mydef}[$h^\pm$ restrictions]
Given the mixed-criticality instance $I_\mathcal{MC}$ and a positive integer $h\in\mathbb{N}$, let $I^{h^-}_\mathcal{MC}$ and $I^{h^+}_\mathcal{MC}$ be sets defined as 
\begin{align*}
I^{h^-}_\mathcal{MC} = &\{(\min\{h, \mathcal{X}_i\},\, (p^{(1)}_i, \ldots, p^{(\min\{ h, \mathcal{X}_i\})}_i)) \; |\, \forall i\in I_\mathcal{MC}\} \notag\\
I^{h^+}_\mathcal{MC} = &\{(\mathcal{X}_i-h+1,\,  (p^{(h)}_i, \ldots, p^{(\mathcal{X}_i)}_i)) \; |\, \forall i\in I_\mathcal{MC}: \mathcal{X}_i \geq h \} \notag
\end{align*}
We refer to $I^{h^-}_\mathcal{MC}$ ($I^{h^+}_\mathcal{MC}$)  as $h^-$ ($h^+$) restriction of the instance  $I_\mathcal{MC}$.
\end{mydef}
The $h^-$ restriction takes an F-shape and cuts off all criticality levels above level $h$. Similarly, given the set of F-shaped tasks, $h^+$ restriction drops all tasks with criticality below $h$, and for the rest, it cuts off criticality levels less than $h$. Restricting an $I_\mathcal{MC}$ instance yields to a mixed-criticality instance since omitting some of the criticality levels for an F-shape gives us an F-shape. The application of the restriction can be viewed as a relaxation the problem.

\begin{prop}[Two Lower Bounds on the Makespan] \label{prop:lb}
For the problem \\$1|mc=3, mu|\cmax$ expressions $lb^-$, $lb^+$ defined as
$$lb^\pm = \min_{\pi\in \Pi(I^{2^\pm}_\mathcal{MC})} \cmax(\pi)$$
are lower bounds on the makespan, where $\Pi(I^{2^+}_\mathcal{MC})$ and $\Pi(I^{2^-}_\mathcal{MC})$ denote the set of all permutations of elements $I^{2^+}_\mathcal{MC}$, $I^{2^-}_\mathcal{MC}$ respectively.
\end{prop}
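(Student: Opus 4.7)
The plan is to show that each restricted instance $I^{2^\pm}_\mathcal{MC}$ is a relaxation of $I_\mathcal{MC}$, so that its optimal makespan lower-bounds the optimum of $I_\mathcal{MC}$. Concretely, from any feasible schedule $(s_1, \ldots, s_n)$ of $I_\mathcal{MC}$ I would construct a feasible schedule of $I^{2^\pm}_\mathcal{MC}$ whose makespan is at most the original one. Since $lb^\pm$ is by definition the optimum of the corresponding restricted instance, the lower-bound claim follows immediately.

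For the $2^-$ case I reuse the same start times. In $I^{2^-}_\mathcal{MC}$ the criticality drops from $\mathcal{X}_i$ to $\mathcal{X}'_i = \min\{2, \mathcal{X}_i\}$ and the top-level processing time weakly decreases. Since $\min\{\mathcal{X}'_i, \mathcal{X}'_j\} \leq \min\{\mathcal{X}_i, \mathcal{X}_j\}$ and processing times are non-decreasing in the level, the restricted non-overlap constraint is implied by the original one, so the schedule stays feasible; and the new makespan $\max_i (s_i + p^{(\mathcal{X}'_i)}_i)$ does not exceed the original $\max_i (s_i + p^{(\mathcal{X}_i)}_i)$.

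For the $2^+$ case I delete the criticality-$1$ tasks and keep the start times of the remaining ones. The key observation is that the index shift $\ell \mapsto \ell - 1$ is neutral for surviving pairs: since both tasks have $\mathcal{X}_i, \mathcal{X}_j \geq 2$ and the new criticality satisfies $\mathcal{X}'_k = \mathcal{X}_k - 1$, the new required gap at level $\min\{\mathcal{X}'_i, \mathcal{X}'_j\}$ equals the original $p^{(\min\{\mathcal{X}_i, \mathcal{X}_j\})}_i$, so the original non-overlap inequality is literally the one required by the restricted instance. The makespan can only decrease because some tasks were dropped and each surviving task keeps its old completion time $s_i + p^{(\mathcal{X}_i)}_i$.

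The main obstacle is the index-shift bookkeeping in the $2^+$ case — verifying that the relevant processing-time levels line up cleanly after removing criticality-$1$ tasks. Once that alignment is pinned down, both constructions exhibit feasible schedules of the restricted instances with makespan no greater than the optimum of $I_\mathcal{MC}$, which is exactly what is needed for $lb^\pm$ to be a valid lower bound.
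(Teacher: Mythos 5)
Your proposal is correct and follows the same route as the paper: the paper's (one-sentence) proof simply observes that each $2^\pm$ restriction is a relaxation of the overlapping conditions, and your argument is the fully worked-out version of exactly that, exhibiting for any feasible schedule of $I_\mathcal{MC}$ a feasible schedule of the restricted instance with no larger makespan. Your careful check that the level shift in the $2^+$ case makes the required separation $p^{(\min\{\mathcal{X}_i,\mathcal{X}_j\})}_i$ coincide with the original one is the detail the paper leaves implicit, and it is handled correctly.
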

\begin{proof}
The $lb^-$ is a lower bound on the makespan of $1|mc=3,mu|\cmax$ since it relaxes on the overlapping condition at the third criticality level. Similarly, $lb^+$ is a lower bound on the makespan since it relaxes on the overlapping condition at the first criticality level. 
\end{proof}

\subsection{Bottom-up Algorithm} \label{sec:botup}
We introduce a heuristic algorithm for the problem $1|mc=3,mu|\cmax$. Let us refer to tasks with $\mathcal{X}_i=3$ (i.e. criticality 3) as to \textsc{Great}-tasks. The \textit{Bottom-up} algorithm is based on the idea of constructing the schedule in two stages. In the first stage, the relaxed problem is solved up to the optimality, which minimizes a lower bound on the optimal makespan of the original problem. The second stage takes the relaxed solution and constructs a locally optimal solution for the original problem. 

The first stage of the algorithm solves $2^-$ restriction of the given problem instance; hence it is an instance of $1|mc=2,mu|\cmax$ problem that can be solved with the model described in Section~\ref{sec:two}. It assigns \textsc{Lo}-tasks to \textsc{Hi}-tasks and  \textsc{Great}-tasks; therefore it forms covering blocks. In the second stage, the algorithm defines a new problem instance $I^\prime_\mathcal{MC}$ of the problem $1|mc=2,mu|\cmax$. The instance is constructed as follows. It contains \textsc{Lo}-tasks with processing time equal to the length of covering blocks from the stage one. \textsc{Lo}-tasks that are not part of any covering block are assigned to an arbitrary covering block. The assignment of \textsc{Lo}-tasks to $2^-$ restricted \textsc{Great}-tasks from the first stage defines \textsc{Hi}-tasks in the new instance $I^\prime_\mathcal{MC}$. Then, the $I^\prime_\mathcal{MC}$ instance is solved once again as an instance of $1|mc=2,mu|\cmax$ problem. See the complete description of the \textit{Bottom-up} algorithm in Alg.~\ref{alg:botup}. 

In general, the \textit{Bottom-up} algorithm produces suboptimal solutions even though they are provably bounded by a factor of $3$ from the optimal solution, as stated by Proposition~\ref{prop:approx}. However, there are cases when we can verify if the produced schedule is optimal. This is achieved by the concept of \textit{critical paths} that captures the cause of achieved makespan.


\begin{algorithm}
\caption{Bottom-up} \label{alg:botup}
\begin{algorithmic}[1]
\State $\pi \gets \text{solve } I^{2^-}_\mathcal{MC}$ restriction by Covering MILP~\ref{milp:mc2}
\State $I^\prime_\mathcal{MC} \gets \emptyset$
\ForAll{covering block $B_i$ in the left-shifted solution $\pi$}  
	\If{$\mathcal{X}_i = 2$ in $I_\mathcal{MC}$}
		\State $\m{P}_i\gets (B_i)$
		\State $I^\prime_\mathcal{MC}\gets I^\prime_\mathcal{MC}\cup\{(1, \m{P}_i)\}$
	\ElsIf{$B_i < p^{(3)}_i$}
		\State $\m{P}_i\gets (B_i, \, p^{(3)}_i)$
		\State $I^\prime_\mathcal{MC}\gets I^\prime_\mathcal{MC}\cup\{(2, \m{P}_i)\}$
	\Else
		\State \Comment{block $B_i$ is \textit{saturated}, it contributes by a constant term to the makespan of $I^\prime_\mathcal{MC}$}
	\EndIf 
\EndFor
\State $\pi\gets$ solve $I^\prime_\mathcal{MC}$ by Covering MILP~\ref{milp:mc2}
\end{algorithmic}
\end{algorithm}

\begin{mydef}[Critical Path]
Given the left-shifted schedule $(s_1, \ldots, s_n)$ of the permutation $\pi$, the critical path is $\mathcal{CP}\subseteq \{1, \ldots, |\tilde{\pi}|\}\times \{1, \ldots, \mathcal{L}\}$ for some $\tilde{\pi}\subseteq\pi$ such that $\forall (i, \ell)\in \mathcal{CP}, i < |\tilde{\pi}|: \, s_{\tilde{\pi}(i)} + p^{(\ell)}_{\tilde{\pi}(i)} = s_{\tilde{\pi}(i+1)}$ where $\sum_{(i, \ell)\in\mathcal{CP}} p^{(\ell)}_{\tilde{\pi}(i)} = \cmax(\pi) = \cmax(\tilde{\pi})$.
\end{mydef} 

Essentially, for any given left-shifted schedule, the critical path is a subset of tasks and their  criticality levels such that $\forall (i, \ell) \in \mathcal{CP}$ holds that if the processing time $p^{(\ell)}_{\tilde{\pi}(i)}$ is increased by some $\epsilon > 0$, then the makespan of the same schedule is also increased by $\epsilon$.

\begin{prop}[Sufficient Optimality Conditions] \label{prop:opt}
If one of following conditions holds, then the schedule produced by the Bottom-up algorithm is optimal for problem $1|mc=3,mu|\cmax$.
\begin{enumerate}
\item There exists a critical path going through the first and the second levels only.
\item Every \textsc{Lo}-task is fully covered by the second criticality level.
\end{enumerate}
\end{prop}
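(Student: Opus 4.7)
The plan is to show that under either hypothesis the Bottom-up makespan $M$ coincides with one of the two lower bounds of Proposition~\ref{prop:lb}: $M = lb^-$ under hypothesis~(1) and $M = lb^+$ under hypothesis~(2). Since $lb^\pm \leq OPT \leq M$ holds unconditionally, either equality immediately forces $M = OPT$ and settles the proposition.

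For hypothesis~(1), I would proceed in two steps. First, view the left-shifted Stage~2 schedule with its start times $s_i$ also as a feasible schedule for $I^{2^-}_\mathcal{MC}$; its restricted makespan is $M' = \max_i\bigl(s_i + p^{(\min\{2, \mathcal{X}_i\})}_i\bigr) \leq M$. The critical-path hypothesis supplies the reverse inequality: its last element $(\tilde{\pi}(k), \ell_k)$ with $\ell_k \leq 2$ satisfies $s_{\tilde{\pi}(k)} + p^{(\ell_k)}_{\tilde{\pi}(k)} = M$ and $\min\{2, \mathcal{X}_{\tilde{\pi}(k)}\} \geq \ell_k$, so the restricted finish of the same task is also at least $M$, whence $M' = M$. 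Second, I argue that $M' = lb^-$: in $I^{2^-}_\mathcal{MC}$ the Stage~2 schedule is merely a rearrangement of Stage~1's covering blocks, because the embedding of a Hi-block into the level-2-to-3 gap of a Great-task collapses to plain concatenation once level~3 is dropped. Proposition~\ref{prop:blocks-comm} and Corollary~\ref{cor:mc2} then tell us that the $I^{2^-}_\mathcal{MC}$ makespan depends only on the sum of Stage~1 block lengths, which is exactly $lb^-$ since Stage~1 solves the $2^-$ restriction to optimality.

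For hypothesis~(2), I would exploit the identity $I^\prime_\mathcal{MC} = I^{2^+}_\mathcal{MC}$. The assumption that every \textsc{Lo}-task is fully covered at the second criticality level forces $B_i^{(1)} = p_i^{(2)}$ for every \textsc{Hi}/\textsc{Great} parent (and trivially for those without covered \textsc{Lo}-tasks); in particular it rules out saturated \textsc{Great}-blocks because $B_i^{(1)} = p_i^{(2)} < p_i^{(3)}$. Plugging these equalities into the construction of $I^\prime_\mathcal{MC}$ in Alg.~\ref{alg:botup} produces a task-by-task match with $I^{2^+}_\mathcal{MC}$: original \textsc{Hi}-tasks become \textsc{Lo}-tasks of length $p_i^{(2)}$, original \textsc{Great}-tasks become \textsc{Hi}-tasks with processing times $(p_i^{(2)}, p_i^{(3)})$, and original \textsc{Lo}-tasks are absorbed into their parents. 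The Covering MILP invoked in Stage~2 therefore solves $I^{2^+}_\mathcal{MC}$ exactly and returns $lb^+$, and by construction this value coincides with the $I_\mathcal{MC}$ makespan $M$ of the Stage~2 output.

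The main obstacle I anticipate is the bookkeeping behind the equality $M' = lb^-$ in hypothesis~(1): one must verify that neither Stage~1 uncovered \textsc{Lo}-tasks (which the algorithm reassigns to arbitrary blocks before Stage~2) nor Stage~1 \textsc{Great}-blocks that already satisfy $B_i^{(1)} \geq p_i^{(3)}$ disrupt the identification, and that Stage~2's ``embedding under level~3'' truly flattens to a concatenation once one moves to the $2^-$ restriction. The remaining steps of both cases are routine once these identifications are settled.
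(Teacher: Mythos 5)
The paper states Proposition~\ref{prop:opt} without proof, so there is nothing to compare against line by line; judged on its own, your sandwich strategy ($lb^\pm \leq OPT \leq M$ from Proposition~\ref{prop:lb}, then $M=lb^-$ under hypothesis~(1) and $M=lb^+$ under hypothesis~(2)) is exactly the argument the surrounding text is set up for, and your treatment of hypothesis~(2) is essentially complete: full coverage forces every Stage~1 block length to equal $p_i^{(2)}$, rules out saturated blocks, makes $I^\prime_\mathcal{MC}$ coincide with $I^{2^+}_\mathcal{MC}$ task by task, and the re-expansion of the absorbed \textsc{Lo}-tasks does not move any completion time, so $M=lb^+$.

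The one point you defer as ``bookkeeping'' in hypothesis~(1) is in fact the crux, and your step~2 as stated does not go through without it. The $2^-$ restriction of the Stage~2 schedule (same start times, third levels deleted) is \emph{not} in general a left-shifted concatenation of the Stage~1 covering blocks: whenever a top-level \textsc{Great}-block has its length dictated by $p_g^{(3)}$ rather than by its level-$\leq 2$ content $B_g+\sum_h B_h$, the next block starts at $s_g+p_g^{(3)}$ and the restricted schedule acquires an idle gap, so Proposition~\ref{prop:blocks-comm}/Corollary~\ref{cor:mc2} cannot be invoked to conclude $M'=lb^-$, and indeed $M'$ can exceed $lb^-$. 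The repair is to use hypothesis~(1) itself at this point rather than structural facts about Stage~2: a critical path confined to levels $1$ and $2$ tiles $[0,M]$ with level-$\leq 2$ processing times, each contained in the level-$\leq 2$ content of its top-level block, so every top-level block's length equals its content and $M=\sum_i B_i + \sum_{j\ \text{uncovered}} p_j^{(1)} = lb^-$ directly (the intermediate quantity $M'$ is then unnecessary). Equivalently: an idle gap in the $2^-$ restriction can only be crossed by a level-$3$ processing time, contradicting hypothesis~(1). With that observation made explicit --- and with a sentence disposing of the uncovered \textsc{Lo}-tasks that Alg.~\ref{alg:botup} reassigns, which simply extend the content of some block and are already counted in $lb^-$ --- your proof closes.
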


When none of the optimality conditions is satisfied, e.g. a critical path is coming through every criticality level, we fallback to the MILP model~\ref{milp:mc3} for the problem $1|mc=3,mu|\cmax$ in order to find an optimal solution or for the proof that the current solution is the optimal one. The solver is supplied with the initial solution and a lower bound obtained by the  \textit{Bottom-up} algorithm. 

\subsection{Covering MILP Model for $1|mc=3,mu|\cmax$} \label{milp:mc3}
The Covering MILP model for three criticality levels uses a similar idea as the model for $1|mc=2,mu|\cmax$. It assigns \textsc{Lo}-tasks to covering blocks and covering blocks to the \textsc{Great}-tasks. The model utilizes the idea that optimal solutions are made of blocks (in this case formed by \textsc{Great}-tasks that cover less critical tasks) whose order is interchangeable within a solution. It assigns \textsc{Lo}-tasks to the \textsc{Hi}-tasks and to $2^-$ restriction of \textsc{Great}-tasks to form covering blocks. Blocks are assigned to the \textsc{Great}-tasks in order to create a solution. The big $M$ constant is as large as the number of \textsc{Lo}-tasks contained in the problem instance.
\begin{align}
&\min \sum_{i|\mathcal{X}_i=3}p_i + \sum_{j|\mathcal{X}_j=2}P_{j,\emptyset} +\sum_{k|\mathcal{X}_k = 1} p^{\text{(1)}}_k x_{\emptyset,\emptyset,k}\\
\text{s.t.} \notag\\
&p_i \geq  p^{\text{(3)}}_i \quad \forall i\in \hi \\
&My_{i,j} \geq \sum_{k|\mathcal{X}_k = 1} x_{i,j,k} \quad \forall i\in \hi\cup\emptyset, \forall j\in \midlvl\\
&P_{j,i} \geq p^{\text{(2)}}_jy_{i,j} \forall i\in\hi\cup\emptyset, \forall j\in \midlvl\\
&P_{j,i} \geq p^{\text{(1)}}_jy_{i,j} +\sum_{k|\mathcal{X}_k = 1}p^{\text{(1)}}_k x_{i,j,k} \notag \\ 
&\forall i\in \hi\cup\emptyset, \forall j\in \midlvl \\
&p_i \geq p^{\text{(2)}}_i + \sum_{j|\mathcal{X}_j = 2} P_{j,i} \quad \forall i\in\hi\\
&p_i \geq p^{\text{(1)}}_i + \sum_{j|\mathcal{X}_j = 2}P_{j,i} + \sum_{k|\mathcal{X}_k = 1} p^{\text{(1)}}_kx_{i,\emptyset,k} \quad\forall i\in\hi \\
&\sum_{i|\mathcal{X}_i=3\cup \emptyset}\sum_{j|\mathcal{X}_j=2\cup\emptyset} x_{i,j,k} \geq 1 \quad\forall k\in \lo\\
&\sum_{i|\mathcal{X}_i=3\cup\emptyset} y_{i,j} \geq 1 \quad \forall j\in\midlvl\\
\text{where} \notag \\
&y_{i,j} \in \{0,1\} \quad\forall i \in\hi\cup\emptyset, \forall j\in \midlvl \notag\\
&x_{i,j,k}\in\{0,1\} \quad\forall i\in \hi\cup\emptyset, \forall j\in \midlvl\cup\emptyset, \notag \\ &\forall k\in \lo\cup\emptyset:  k\neq \emptyset\vee (i=\emptyset \wedge j=\emptyset) \notag\\
&p_i \in \mathbb{Z}^+_0 \quad \forall i\in\hi \notag\\
&P_{j,i} \in \mathbb{Z}^+_0 \quad 
\forall i\in\hi\cup\emptyset, \forall j\in\midlvl \notag
\end{align}

When \textit{Bottom-up} fails to prove optimality, it fallbacks to this model while supplying the $lb^-$ lower bound and the initial solution. The reason for executing \textit{Bottom-up} ahead solving MILP model~\ref{milp:mc3} is two-fold. First, we have observed the solver struggles to prove optimality when the solution is clearly optimal regarding the critical path. The other observation is that if the problem instance contains the majority of tasks with criticality one and two, then solving its $2^-$ restriction frequently yields optimal solution since the highest criticality levels are not likely to be utilized. Similar holds for the instances with a large number of tasks with higher criticality. Furthermore, solving $2^\pm$ restrictions of $I_\mathcal{MC}$ is cheap compared to the solving the whole MILP model~\ref{milp:mc3} as it can be seen in Tab.~\ref{tab:mc2}.



\section{COMPUTATIONAL EXPERIMENTS} \label{sec:results}

For the problem $1|mc=2,mu|\cmax$ we have randomly generated sets of $20$ instances with $n$ tasks for each $n\in\{10, \ldots, 200\}$. Criticalities of tasks were distributed uniformly. The processing time of a task at level $1$ is sampled from the uniform distribution $\mathcal{U}(1, 11)$. For tasks with the criticality of $2$, the prolongation at level $2$ is sampled from uniform distribution $\mathcal{U}(1, 10)$.

For the problem $1|mc=3,mu|\cmax$ we have randomly generated sets of $20$ instances with $n$ tasks for each $n\in\{10, \ldots, 80\}$. For each $n$, the set contains instances with different splits of tasks' criticalities and different distributions  for prolongation  (e.g. $\mathcal{U}(1, 10)$ and $\mathcal{U}(1, 7)$ for the second level, $\mathcal{U}(1, 10)$ and $\mathcal{U}(1, 14)$ for the third level, etc.) in order to generate instances of various properties.

\begin{table*}
\centering
\caption{Computational results for the problem $1|mc=2,mu|\cmax$.} \label{tab:mc2}
\resizebox{1\textwidth}{!}{
\begin{tabular}{| l | r | r | r | r | r | r | r | r | }
\cline{2-9}
\multicolumn{1}{c|}{} & \multicolumn{4}{c}{Covering MILP~\ref{milp:mc2}} & \multicolumn{4}{|c|}{Relative Order MILP \cite{hanzalek2016mc} } \\
\hline
$n$ tasks & avg $t$ $[s]$ & max $t$ $[s]$ & unsl $[\%]$ & avg gap $[\%]$ & avg $t$ $[s]$ & max $t$ $[s]$ & unsl $[\%]$ & avg gap $[\%]$ \\
\hline
$10$ & $>0.01$ & $0.03$ & 0 & --- &  $13.07\,(\pm 44.93)$ & $200.22$ & 0 & --- \\
$15$ & $>0.01$ & $0.03$ & 0 & --- &  $49.67\,(\pm 49.38)$ & $127.09$ & 60 & $27.32\,(\pm 12.54)$\\
$20$ & $0.01\,(\pm 0.01)$ & $0.03$ & 0 & --- &  --- & --- & 100& $40.09\,(\pm 15.27)$\\
$40$ & $0.09\,(\pm 0.17)$ & $0.81$ & 0 & --- &  --- & --- & 100& $77.66\,(\pm 6.23)$\\
$60$ & $1.37\,(\pm 4.33)$ & $19.71$ & 0 & --- &  --- & --- & 100 &$84.23\,(\pm 2.90)$ \\
80 & $0.38\,(\pm 0.45)$ & $1.94$ & 0& --- &  --- & --- & 100 & $90.72\,(\pm 1.77)$\\
100 & $1.28\,(\pm 1.38)$ & $5.05$ & 0 & --- &  --- & --- & 100 & $93.38\,(\pm 0.76)$\\
150 & $11.77\,(\pm 24.29)$ & $93.01$ & 0 & --- &  --- & --- & 100 & $96.02\,(\pm 0.24)$\\
200 & $22.69\,(\pm 61.24)$ & $281.04$ & 0 & --- &  --- & --- & 100 & $97.33\,(\pm 0.13)$\\
\hline
\end{tabular}
}
\end{table*}

\begin{table*}
\centering
\caption{Computational results for the problem $1|mc=3,mu|\cmax$.} \label{tab:mc3}
\resizebox{1\textwidth}{!}{
\begin{tabular}{| l | r | r | r | r | r | r | r | r | }
\cline{2-9}
\multicolumn{1}{c|}{} & \multicolumn{4}{c}{Bottom-up w/ Covering MILP~\ref{milp:mc3}} & \multicolumn{4}{|c|}{Relative Order MILP \cite{hanzalek2016mc}} \\
\hline
$n$ tasks & avg $t$ $[s]$ & max $t$ $[s]$ & unsl $[\%]$ & avg gap $[\%]$ & avg $t$ $[s]$ & max $t$ $[s]$ & unsl $[\%]$ & avg gap $[\%]$ \\
\hline
10 & $0.02\,(\pm 0.01)$ & $0.04$ & 0 & --- &  $0.09\,(\pm 0.07)$ & $0.28$ & 0 & --- \\
20 & $0.16\,(\pm 0.36)$ & $1.66$ & 0 & --- &  --- & --- & 100 & $28.71\,(\pm 16.62)$\\
30 & $0.17\,(\pm 0.17)$ & $0.66$ & 0 & --- &  --- & --- & 100 & $63.28\,(\pm 7.35)$ \\
40 & $0.69\,(\pm 1.02)$ & $3.61$ & 0 & --- &  --- & --- & 100 & $72.85\,(\pm 6.14)$\\
50 & $2.40\,(\pm 7.42)$ & $33.56$ & 0 & --- &  --- & --- & 100 & $80.61\,(\pm 2.96)$\\
60 & $6.71\,(\pm 11.97)$ & $44.67$ & 0 & --- &  --- & --- & 100 & $84.30\,(\pm 2.70)$\\
70 & $11.30\,(\pm 22.31)$ & $79.38$ & 10& $0.38\,(\pm 0.19)$ &  --- & --- & 100 & $89.34\,(\pm 1.43)$\\
80 & $37.92\,(\pm 68.82)$ & $224.86$ & 20& $0.34\,(\pm 0.13)$ &  --- & --- & 100 & $91.09\,(\pm 1.40)$\\
\hline
\end{tabular}
}
\end{table*}

The column \textit{avg} $t$ (\textit{max} $t$) in Tab.~\ref{tab:mc2} and \ref{tab:mc3} denotes the average (maximal) computational time for instances that were solved within the time limit of $300\jed{s}$. The column \textit{unsl} contains the percentage of instances that were not solved within the time limit and \textit{avg gap} denotes average optimality gap proven by the solver for the unsolved instances. Results were obtained with two Intel Xeon E5-2620 v2 @ $2.10\jed{GHz}$ processors using Gurobi Optimizer $6.5$ with the algorithms implemented in Python 3.4.

In Tab.~\ref{tab:mc2} it can be seen that our model is able to solve about an order of the magnitude larger problem instances. The \textit{Relative Order} model proposed by \cite{hanzalek2016mc} consistently fails to narrow optimality gap for instances with more than 40 tasks. In Tab.~\ref{tab:mc3} it is shown that the combination of \textit{Bottom-up} heuristic and MILP~\ref{milp:mc3} is able to solve reliably instances with 60 tasks up to the optimality and almost all instances with 80 tasks. Moreover, the proven gap is much smaller than for the \textit{Relative Order} model; therefore it shows that our model has stronger linear relaxation.

\section{CONCLUSION} \label{sec:conclusion}
In this paper, we have proposed two exact approaches for the problem of non-preemptive mixed-criticality match-up scheduling for solving the problem of message retransmission in time-triggered communication protocols. The algorithms outperform the approach recently proposed by a large margin. Furthermore, we showed the membership of $1|mc=\mathcal{L},mu|\cmax$ problem in $\mathcal{APX}$ complexity class for an arbitrary fixed $\mathcal{L}$. 
\bibliographystyle{plain}
{\small
\bibliography{references}}

\end{document}